\title{Remarks on Fixed Point Assertions in Digital
Topology, 3}
\author{Laurence Boxer
\thanks{Department of Computer and Information Sciences, Niagara University, NY 14109, USA; and
Department of Computer Science and Engineering, SUNY at Buffalo, Buffalo, NY, USA. email: boxer@niagara.edu}
}
\tikzset{vertex/.style={circle,draw,fill,inner sep=0pt,minimum size=1mm}}
\theoremstyle{plain}
\newtheorem{thm}{Theorem}
\newtheorem{prop}[thm]{Proposition}
\newtheorem{cor}[thm]{Corollary}
\newtheorem{remark}[thm]{Remark}
\theoremstyle{definition}
\newtheorem{definition}[thm]{Definition}
\newtheorem{exl}[thm]{Example}
\numberwithin{thm}{section}
\newcommand{\adj}{\leftrightarrow}
\newcommand{\adjeq}{\leftrightarroweq}
\def\N{{\mathbb N}}
\def\Z{{\mathbb Z}}
\def\R{{\mathbb R}}
\begin{document}
\date{}
\maketitle

\begin{abstract}
We continue the work of~\cite{BxSt19} and~\cite{Bx19}, in which are considered papers
in the literature that discuss fixed point assertions
in digital topology. We discuss published assertions 
that are incorrect or incorrectly proven; that are 
severely limited or reduce to triviality under 
``usual" conditions; or that we improve upon.

Key words and phrases: digital image; fixed point;
approximate fixed point
\end{abstract}

\section{Introduction}
The topic of fixed points in digital topology has drawn
much attention in recent papers. The quality of
discussion among these papers is uneven; 
while some assertions have been correct and interesting, others have been incorrect, incorrectly proven, or reducible to triviality.
In~\cite{BxSt19} and~\cite{Bx19}, we have discussed many shortcomings in earlier papers and have offered
corrections and improvements. We continue this work in the current paper.

\section{Preliminaries}
We use $\N$ to represent the natural numbers,
$\Z$ to represent the integers, and $\R$ to represent the reals.

A {\em digital image} is a pair $(X,\kappa)$, where $X \subset \Z^n$ 
for some positive integer $n$, and $\kappa$ is an adjacency relation on $X$. 
Thus, a digital image is a graph.
In order to model the ``real world," we usually take $X$ to be finite,
although there are several papers that consider
infinite digital images. The points of $X$ may be 
thought of as the ``black points" or foreground of a 
binary, monochrome ``digital picture," and the 
points of $\Z^n \setminus X$ as the ``white points"
or background of the digital picture.

\subsection{Adjacencies, connectedness, continuity}
In a digital image $(X,\kappa)$, if
$x,y \in X$, we use the notation
$x \adj_{\kappa}y$ to
mean $x$ and $y$ are $\kappa$-adjacent; we may write
$x \adj y$ when $\kappa$ can be understood. 
We write $x \adjeq_{\kappa}y$, or $x \adjeq y$
when $\kappa$ can be understood, to
mean 
$x \adj_{\kappa}y$ or $x=y$.

The most commonly used adjacencies in the study of digital images 
are the $c_u$ adjacencies. These are defined as follows.
\begin{definition}
Let $X \subset \Z^n$. Let $u \in \Z$, $1 \le u \le n$. Let 
$x=(x_1, \ldots, x_n),~y=(y_1,\ldots,y_n) \in X$. Then $x \adj_{c_u} y$ if 
\begin{itemize}
    \item for at most $u$ distinct indices~$i$,
    $|x_i - y_i| = 1$, and
    \item for all indices $j$ such that $|x_j - y_j| \neq 1$ we have $x_j=y_j$.
\end{itemize}
\end{definition}

\begin{definition}
{\rm \cite{Rosenfeld}}
A digital image $(X,\kappa)$ is
{\em $\kappa$-connected}, or just {\em connected} when
$\kappa$ is understood, if given $x,y \in X$ there
is a set $\{x_i\}_{i=0}^n \subset X$ such that
$x=x_0$, $x_i \adj_{\kappa} x_{i+1}$ for
$0 \le i < n$, and $x_n=y$.
\end{definition}

\begin{definition}
{\rm \cite{Rosenfeld, Bx99}}
Let $(X,\kappa)$ and $(Y,\lambda)$ be digital
images. A function $f: X \to Y$ is 
{\em $(\kappa,\lambda)$-continuous}, or
{\em $\kappa$-continuous} if $(X,\kappa)=(Y,\lambda)$, or
{\em digitally continuous} when $\kappa$ and
$\lambda$ are understood, if for every
$\kappa$-connected subset $X'$ of $X$,
$f(X')$ is a $\lambda$-connected subset of $Y$.
\end{definition}

\begin{thm}
{\rm \cite{Bx99}}
A function $f: X \to Y$ between digital images
$(X,\kappa)$ and $(Y,\lambda)$ is
$(\kappa,\lambda)$-continuous if and only if for
every $x,y \in X$, if $x \adj_{\kappa} y$ then
$f(x) \adjeq_{\lambda} f(y)$.
\end{thm}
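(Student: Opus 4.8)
The statement is a biconditional, so the plan is to establish the two implications separately. The forward implication---that continuity in the sense of the definition forces the pointwise adjacency condition---should be the quicker of the two; the reverse implication, recovering connectedness of images from the pointwise condition, will require a path-tracing argument. Throughout I would lean on the observation that the connectedness definition evaluated on a singleton is vacuously satisfied, and on a two-point set $\{p,q\}$ with $p \neq q$ holds exactly when $p \adj_{\lambda} q$.

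For the forward direction, I would assume $f$ is $(\kappa,\lambda)$-continuous and take any $x,y \in X$ with $x \adj_{\kappa} y$. The set $\{x,y\}$ is then $\kappa$-connected, so by hypothesis its image $f(\{x,y\}) = \{f(x),f(y)\}$ is $\lambda$-connected. Since this image has at most two points, it is $\lambda$-connected precisely when its points coincide or are $\lambda$-adjacent, which is exactly the assertion $f(x) \adjeq_{\lambda} f(y)$.

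For the reverse direction, I would assume the pointwise condition and let $X'$ be an arbitrary $\kappa$-connected subset of $X$. Given two points of $f(X')$, I would write them as $f(x_1)$ and $f(x_2)$ with $x_1,x_2 \in X'$, and use $\kappa$-connectedness of $X'$ to fix a sequence $x_1 = a_0, a_1, \ldots, a_n = x_2$ in $X'$ with $a_i \adj_{\kappa} a_{i+1}$. Applying the hypothesis to each consecutive pair yields $f(a_i) \adjeq_{\lambda} f(a_{i+1})$, so the sequence $f(a_0), \ldots, f(a_n)$ has every consecutive pair either equal or $\lambda$-adjacent, and every term lies in $f(X')$.

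The main obstacle is that this image sequence need not be a path in the strict sense of the connectedness definition, since consecutive terms may be \emph{equal} whereas the definition demands genuine adjacency $\adj_{\lambda}$. I would resolve this by collapsing maximal runs of equal consecutive values: deleting each $f(a_{i+1})$ that equals its predecessor produces a subsequence $c_0, \ldots, c_m$ with $c_0 = f(x_1)$, $c_m = f(x_2)$, and consecutive terms distinct. For each consecutive pair $c_k, c_{k+1}$ the surviving relation $\adjeq_{\lambda}$ together with $c_k \neq c_{k+1}$ upgrades to $c_k \adj_{\lambda} c_{k+1}$, so the collapsed sequence is a genuine $\lambda$-path in $f(X')$ joining the two chosen points. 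As these were arbitrary, $f(X')$ is $\lambda$-connected, which completes the argument.
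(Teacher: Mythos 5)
Your proof is correct. Note that the paper itself offers no proof of this statement; it is quoted from the reference \cite{Bx99} as a known characterization, so there is no in-paper argument to compare against. Your two implications are the standard ones: the forward direction via the image of a two-point connected set, and the reverse direction via pushing a $\kappa$-path forward and collapsing consecutive repeated values to obtain a genuine $\lambda$-path. You correctly identify and handle the one delicate point, namely that $\adjeq_{\lambda}$ on consecutive image terms must be upgraded to $\adj_{\lambda}$ (or the term deleted) before the sequence qualifies as a path under the paper's definition of connectedness, including the degenerate case where the whole image sequence collapses to a single point.
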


\begin{thm}
\label{composition}
{\rm \cite{Bx99}}
Let $f: (X, \kappa) \to (Y, \lambda)$ and
$g: (Y, \lambda) \to (Z, \mu)$ be continuous 
functions between digital images. Then
$g \circ f: (X, \kappa) \to (Z, \mu)$ is continuous.
\end{thm}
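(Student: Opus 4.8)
The plan is to work directly from the definition of digital continuity in terms of connected subsets, which makes the argument almost immediate. First I would fix an arbitrary $\kappa$-connected subset $X'$ of $X$. Since $f$ is $(\kappa,\lambda)$-continuous, the image $f(X')$ is a $\lambda$-connected subset of $Y$. Applying the $(\lambda,\mu)$-continuity of $g$ to this connected set, $g(f(X'))$ is a $\mu$-connected subset of $Z$. Finally I would observe that $g(f(X')) = (g\circ f)(X')$, so $(g\circ f)(X')$ is $\mu$-connected; as $X'$ was an arbitrary connected subset of $X$, this is exactly the definition of $(\kappa,\mu)$-continuity of $g\circ f$.

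An alternative, equally short route uses the adjacency characterization stated just above (the result of~\cite{Bx99} that a function is continuous if and only if $x\adj_\kappa y$ implies $f(x)\adjeq_\lambda f(y)$). Here I would take $x,y\in X$ with $x\adj_\kappa y$; continuity of $f$ gives $f(x)\adjeq_\lambda f(y)$, i.e. either $f(x)\adj_\lambda f(y)$ or $f(x)=f(y)$. In the first case, continuity of $g$ gives $g(f(x))\adjeq_\mu g(f(y))$; in the second case $g(f(x))=g(f(y))$, so the relation $\adjeq_\mu$ holds trivially. Either way $(g\circ f)(x)\adjeq_\mu(g\circ f)(y)$, and the characterization theorem then yields continuity of $g\circ f$.

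The only point requiring any care --- and the reason the $\adjeq$ (reflexive closure) is carried throughout the theory --- is the degenerate possibility that $f$ collapses two $\kappa$-adjacent points to a single point of $Y$. This is precisely the $f(x)=f(y)$ alternative above, and it is handled for free because a point is trivially $\adjeq_\mu$-related to itself. I expect this to be the entire ``obstacle,'' and it is really only a bookkeeping remark rather than a genuine difficulty. For that reason I would lean toward the first, connected-subset proof, since it sidesteps the case split altogether by never unpacking $\adjeq$ into its two alternatives.
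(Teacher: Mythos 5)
Your proposal is correct. Note that the paper itself gives no proof of this theorem --- it is quoted from the cited reference as a known background result --- so there is nothing to compare against line by line; but both of your arguments are valid and standard. The connected-subset route follows immediately from the definition of digital continuity used in this paper, and the adjacency route follows from the characterization theorem stated just above, with the $f(x)=f(y)$ degenerate case handled exactly as you describe by the reflexivity built into $\adjeq$. Either version would serve as a complete proof.
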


\subsection{Fixed, approximate fixed points}
A {\em fixed point} of a function $f: X \to X$ is a
point $x \in X$ such that $f(x) = x$. If
$(X,\kappa)$ is a digital image, an 
{\em almost fixed point} ~\cite{Rosenfeld}
or {\em approximate fixed point}~\cite{BEKLL} of
$f: X \to X$ is a
point $x \in X$ such that $f(x) \adjeq_{\kappa} x$. 

\subsection{Digital metric spaces}
A {\em digital metric space}~\cite{EgeKaraca15} is a triple
$(X,d,\kappa)$, where $(X,\kappa)$ is a digital image and $d$ is a metric on $X$. 
We are not convinced that
this is a notion worth developing; under conditions 
in which a digital image models a ``read world" image, 
$X$ is finite or $d$ is (usually) an $\ell_p$ metric, so that 
$(X,d,\kappa)$ is discrete as a topological space. Typically,
assertions in the literature do not make use of both
$d$ and $\kappa$,
so that this notion has an artificial feel. E.g., for a
discrete topological space, all self-maps are continuous,
although on digital images, self-maps are often not digitally continuous.

We say a sequence $\{x_n\}_{n=0}^{\infty}$ is {\em eventually constant} if for some $m>0$, $n>m$
implies $x_n=x_m$.

\begin{prop}
{\em \cite{Han16}}
\label{eventuallyConst}
Let $(X,d, \kappa)$ be a digital metric space. If for some $a>0$ and all
distinct $x,y \in X$ we have $d(x,y) > a$, then any Cauchy sequence in $X$
is eventually constant, and $(X,d)$ is a complete metric space.
\end{prop}

Note that the hypotheses of Proposition~\ref{eventuallyConst} are satisfied
if $X$ is finite or if $d$ is an $\ell_p$ metric.

\section{Universal functions and AFPP}

A digital image $(X,\kappa)$ has the {\em approximate fixed point property} (AFPP) if every
$\kappa$-continuous $f: X \to X$ has an
approximate fixed point.

We can paraphrase Theorem~3.3 of~\cite{Rosenfeld} as
follows.

\begin{thm}
\label{RosenfeldAFPP}
A digital interval $([a,b]_{\Z}, c_1)$ has the AFPP.
$\Box$
\end{thm}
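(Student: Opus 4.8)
The plan is to mimic the classical intermediate value argument in the discrete setting by studying the ``displacement'' function $g\colon [a,b]_{\Z}\to\Z$ given by $g(x)=f(x)-x$. Since $f$ maps into $[a,b]_{\Z}$, we have $f(a)\ge a$ and $f(b)\le b$, so the boundary values satisfy $g(a)\ge 0$ and $g(b)\le 0$. The goal is to locate a point $x_0$ with $|g(x_0)|\le 1$, since this is exactly the statement that $f(x_0)\adjeq_{c_1}x_0$, i.e.\ that $x_0$ is an approximate fixed point.

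First I would extract the local behavior of $g$ from continuity. For consecutive points $x,x+1\in[a,b]_{\Z}$ we have $x\adj_{c_1}(x+1)$, so by the characterization of continuity stated above, $f(x)\adjeq_{c_1}f(x+1)$, i.e.\ $|f(x+1)-f(x)|\le 1$. Therefore $g(x+1)-g(x)=\bigl(f(x+1)-f(x)\bigr)-1\in\{-2,-1,0\}$. Two consequences matter: $g$ is non-increasing, and it decreases by at most $2$ at each unit step.

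Next I would run the discrete intermediate value argument. Because $g(b)\le 0\le 1$, the set of $x$ with $g(x)\le 1$ is nonempty; let $x_0$ be its least element. If $x_0=a$, then $0\le g(a)\le 1$ and we are done. Otherwise $x_0>a$ and $g(x_0-1)>1$; since $g$ is integer-valued this forces $g(x_0-1)\ge 2$, and the bounded-decrement property gives $g(x_0)\ge g(x_0-1)-2\ge 0$. Combined with $g(x_0)\le 1$, this yields $0\le g(x_0)\le 1$, so $|f(x_0)-x_0|\le 1$ and $x_0$ is the desired approximate fixed point.

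The only real obstacle is the one that distinguishes the digital statement from its continuous cousin: $g$ need not take the value $0$, so we cannot simply invoke a root-finding argument. The crucial point that rescues the proof is that $g$ cannot drop from a value $\ge 2$ directly to a value $\le -1$ in a single step, precisely because each decrement is at most $2$; hence $g$ must ``land in the band'' $\{0,1\}$ as it crosses from nonnegative to nonpositive values. Verifying this bounded-jump property---rather than any delicate estimate---is what carries the argument.
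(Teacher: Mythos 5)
Your argument is correct: the displacement function $g(x)=f(x)-x$ satisfies $g(a)\ge 0$, $g(b)\le 0$, and drops by at most $2$ per unit step, so taking the least $x_0$ with $g(x_0)\le 1$ does force $0\le g(x_0)\le 1$ and hence $f(x_0)\adjeq_{c_1}x_0$. The paper itself gives no proof of this statement (it is quoted from Rosenfeld's paper and marked $\Box$), but your discrete intermediate-value argument is essentially the classical one used there, so nothing further is needed.
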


\begin{definition}
\label{univ}
{\rm \cite{BEKLL}}
Let $(X,\kappa)$ and $(Y,\lambda)$ be digital images.
A $(\kappa, \lambda)$-continuous
function $f: X \to Y$ is {\em universal for $(X, Y)$}
if given a $(\kappa, \lambda)$-continuous function
$g: X \to Y$ such that $g \neq f$,
there exists $x \in X$ such that 
$f(x) \adj_{\lambda} g(x)$.
\end{definition}

It was shown in~\cite{BEKLL} that there is a
relationship between the AFPP and universal functions. In this section, we show there are
advantages in the study of the AFPP to replacing the
notion of universal function with a similar notion of
a ``weakly universal function." This enables us to
make several improvements on results of~\cite{BEKLL}.

The following assertion, one implication of which is 
incorrect, appears as Proposition~5.5
of~\cite{BEKLL}.

\begin{quote}
    Let $(X,\kappa)$ be a digital image. Then
    $(X,\kappa)$ has the AFPP if and only if
    the identity function $1_X$ is universal for
    $(X,X)$.
\end{quote}

The implication of this assertion that is correct
is stated in the following with its proof as given
in~\cite{BEKLL}.

\begin{prop}
\label{univ-AFPP}
Let $(X,\kappa)$ be a digital image. If
    the identity function $1_X$ is universal for
    $(X,X)$, then $(X,\kappa)$ has the AFPP.
\end{prop}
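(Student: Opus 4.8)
The plan is to derive the AFPP directly from the definition of universality, via a short case analysis on whether the given self-map equals the identity. Let $f : X \to X$ be an arbitrary $\kappa$-continuous function; the goal is to produce a point $x \in X$ with $f(x) \adjeq_{\kappa} x$.

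First I would dispose of the trivial case $f = 1_X$. Here every point of $X$ is a genuine fixed point, so $f(x) = x$ for every $x \in X$, and an approximate fixed point exists automatically.

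The substantive case is $f \neq 1_X$. Here I would invoke the hypothesis that $1_X$ is universal for $(X,X)$, taking the function called $g$ in Definition~\ref{univ} to be our $f$. Since $f$ is $\kappa$-continuous and $f \neq 1_X$, universality supplies a point $x \in X$ with $1_X(x) \adj_{\kappa} f(x)$, that is, $x \adj_{\kappa} f(x)$. Because $\kappa$-adjacency is a symmetric relation, this yields $f(x) \adj_{\kappa} x$, and hence $f(x) \adjeq_{\kappa} x$, which is exactly the approximate-fixed-point condition.

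Since an approximate fixed point is produced in both cases, every $\kappa$-continuous self-map of $X$ has one, so $(X,\kappa)$ has the AFPP. I do not expect a genuine obstacle here: the only subtlety is recalling that universality furnishes an adjacency only when the competing map differs from $1_X$, which is precisely why the case $f = 1_X$ must be isolated first; the symmetry of $\adj_{\kappa}$, used to pass from $x \adj_{\kappa} f(x)$ to $f(x) \adj_{\kappa} x$, is the sole additional ingredient.
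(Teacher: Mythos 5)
Your proposal is correct and follows essentially the same argument as the paper: apply the universality of $1_X$ to an arbitrary continuous $f \neq 1_X$ to obtain $x$ with $f(x) \adj_{\kappa} 1_X(x) = x$. The only difference is that you make the trivial case $f = 1_X$ explicit, which the paper leaves implicit.
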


\begin{proof}
If $1_X$ is universal for $(X,X)$, then for
$1_X \neq f: X \to X$, $f$ being 
$\kappa$-continuous, there exists $x \in X$ such that
$f(x)\adj_{\kappa} 1_X(x) = x$. Thus $X$ has 
the AFPP.
\end{proof}

However, the converse of Proposition~\ref{univ-AFPP}
is not generally true, as shown be the following.

\begin{exl}
\label{nonUwAFPP}
Let $f: ([-1,1]_{\Z},c_1) \to ([-1,1]_{\Z},c_1)$
be the map $f(z)=-z$. Then $f$ is $c_1$-continuous, and there is no 
$z \in [-1,1]_{\Z}$ such that $f(z) \adj_{c_1} z$.
Hence $1_{[-1,1]_{\Z}}$ is not a universal 
function for $([-1,1]_{\Z},[-1,1]_{\Z})$.
However, by Theorem~\ref{RosenfeldAFPP}, $([-1,1]_{\Z},c_1)$ has the AFPP. $\Box$
\end{exl}

\begin{definition}
\label{weakUniv}
Let $(X,\kappa)$ and $(Y,\lambda)$ be digital 
images. Let
$f: X \to Y$ be $(\kappa,\lambda)$-continuous. Then
$f$ is a {\em weakly universal function} for
$(X,Y)$ if for every $(\kappa,\lambda)$-continuous
$g: X \to Y$ such that $g \neq f$ there exists
$x \in X$ such that $f(x) \adjeq_{\lambda} g(x)$.
$\Box$
\end{definition}

Notice the difference between Definitions~\ref{univ}
and~\ref{weakUniv}: the former requires
$f(x)$ and $g(x)$ to be adjacent, while the latter
requires $f(x)$ and $g(x)$ to be adjacent or equal.

\begin{prop}
\label{univIsWeakly}
A universal function between digital images is
weakly universal.
\end{prop}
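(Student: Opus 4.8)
The plan is to argue directly from the two definitions, since the proposition is an immediate consequence of comparing them. The only difference between Definition~\ref{univ} and Definition~\ref{weakUniv} is that universality asks for a point $x$ with $f(x) \adj_{\lambda} g(x)$, whereas weak universality asks only for a point $x$ with $f(x) \adjeq_{\lambda} g(x)$. Since $\adjeq_{\lambda}$ is by definition the relation ``$\adj_{\lambda}$ or $=$,'' any pair of points that is $\lambda$-adjacent is automatically $\adjeq_{\lambda}$-related. Thus a witness produced by universality serves, unchanged, as a witness for weak universality.

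Concretely, I would let $f: X \to Y$ be universal for $(X,Y)$ and fix an arbitrary $(\kappa,\lambda)$-continuous $g: X \to Y$ with $g \neq f$. Applying Definition~\ref{univ} to this $g$ yields a point $x \in X$ with $f(x) \adj_{\lambda} g(x)$. From $f(x) \adj_{\lambda} g(x)$ I immediately obtain $f(x) \adjeq_{\lambda} g(x)$, and since $g$ was an arbitrary continuous map distinct from $f$, the map $f$ meets the requirement of Definition~\ref{weakUniv}.

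I do not anticipate any genuine obstacle here: the implication is forced once $\adjeq_{\lambda}$ is recognized as the reflexive closure of $\adj_{\lambda}$. The entire content lies in the bookkeeping of definitions, and no continuity argument, connectivity argument, or appeal to the structure of $X$ or $Y$ is needed beyond the continuity of $g$ that is already built into both hypotheses. The expected ``difficulty,'' if any, is purely one of stating the one-line inference cleanly rather than of mathematical substance.
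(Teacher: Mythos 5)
Your proposal is correct and takes exactly the same route as the paper, which simply declares the result immediate from Definitions~\ref{univ} and~\ref{weakUniv}; you have merely written out the one-line inference that a witness for $f(x) \adj_{\lambda} g(x)$ is automatically a witness for $f(x) \adjeq_{\lambda} g(x)$. No further comment is needed.
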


\begin{proof}
This is immediate from Definitions~\ref{univ}
and~\ref{weakUniv}.
\end{proof}

For a graph $G=(V,E)$ ($V$ is the vertex set; $E$
is the edge set), a subset $D$ of $V$ is called
{\em dominating} if for every $v \in V$, either
$v \in D$ or there is a $w \in D$ such that
$\{v,w\} \in E$~\cite{CL}. The following generalizes
a result of~\cite{BEKLL}.

\begin{prop}
Let $(X,\kappa)$ and $(Y,\lambda)$ be digital images.
If $f: X \to Y$ is a weakly universal function, then
$f(X)$ is $\lambda$-dominating in $Y$.
\end{prop}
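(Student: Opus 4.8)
The plan is to prove the contrapositive: if $f(X)$ is not $\lambda$-dominating in $Y$, then $f$ is not weakly universal. So I would begin by assuming $f(X)$ fails to dominate and unwinding the definition of a dominating set: there must be a point $y_0 \in Y$ such that $y_0 \notin f(X)$ and no $w \in f(X)$ satisfies $\{y_0,w\} \in E$. Combining these, $y_0$ is neither equal nor $\lambda$-adjacent to any point of $f(X)$; that is, $y_0$ is not $\adjeq_\lambda$ to any element of $f(X)$. I would fix such a $y_0$.

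The key step is to produce a continuous competitor $g \neq f$ that witnesses the failure of weak universality, and the natural candidate is the constant map $g \colon X \to Y$ with $g(x) = y_0$ for all $x \in X$. Constant maps are $(\kappa,\lambda)$-continuous: if $x \adj_\kappa x'$ then $g(x)=y_0=g(x')$, so $g(x) \adjeq_\lambda g(x')$, which is exactly the edgewise criterion for continuity stated earlier. I also need $g \neq f$; this follows because $y_0 \notin f(X)$ forces $f(x) \neq y_0 = g(x)$ for every $x \in X$ (using that $X$ is nonempty, as is customary for digital images). In fact $g$ and $f$ disagree at every point of $X$.

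To close the argument I would verify that no $x \in X$ satisfies $f(x) \adjeq_\lambda g(x)$. Indeed, for any $x$ we have $f(x) \in f(X)$, and by the choice of $y_0$ the point $y_0$ is not $\adjeq_\lambda f(x)$; since $\lambda$ is a symmetric adjacency relation, $f(x)$ is not $\adjeq_\lambda y_0 = g(x)$. Thus $g$ is a $(\kappa,\lambda)$-continuous map, distinct from $f$, for which there is no $x \in X$ with $f(x) \adjeq_\lambda g(x)$, contradicting Definition~\ref{weakUniv}. Hence $f(X)$ must be $\lambda$-dominating in $Y$.

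I expect no serious obstacle here; the whole proof rests on selecting the right test function. The only points that genuinely require care are (i) the verification that $g \neq f$, which uses the stronger fact $y_0 \notin f(X)$ rather than mere non-adjacency, and (ii) the appeal to symmetry of $\lambda$ to pass from ``$y_0$ is not $\adjeq_\lambda f(x)$'' to ``$f(x)$ is not $\adjeq_\lambda g(x)$.''
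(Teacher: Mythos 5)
Your proof is correct and uses the same key idea as the paper: testing weak universality against a constant map $\tilde{y}\colon X\to Y$. The paper argues directly (for arbitrary $y\in Y$, weak universality applied to $\tilde{y}$ gives $x$ with $f(x)\adjeq_\lambda y$) while you argue the contrapositive, and your version is actually slightly more careful in that you explicitly verify $g\neq f$ (needed to invoke Definition~\ref{weakUniv}), a point the paper's proof passes over silently.
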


\begin{proof}
Let $y\in Y$ and let $\tilde{y}: X \to Y$ be
the constant function with image $\{y\}$. Since
$f$ is weakly universal, there exists $x \in X$
such that $f(x) \adjeq_{\lambda} \tilde{y}(x) = y$.
Since $y$ is an arbitrary member of $Y$, 
the assertion follows.
\end{proof}

\begin{thm}
\label{1XweaklyUniv}
The digital image $(X,\kappa)$ has the AFPP if and
only if $1_X$ is weakly universal for $(X,X)$.
$\Box$
\end{thm}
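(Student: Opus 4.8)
The plan is to establish the two implications separately, directly from the definitions, exploiting the fact that the relation $\adjeq_{\kappa}$ is symmetric (it is the union of the symmetric adjacency relation $\adj_{\kappa}$ with equality). The content of the theorem is essentially the observation that the condition ``$x$ is an approximate fixed point of $g$,'' namely $g(x) \adjeq_{\kappa} x$, coincides with the condition $1_X(x) \adjeq_{\kappa} g(x)$ appearing in Definition~\ref{weakUniv}, once the adjacency is read both ways.

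For the forward direction I would assume $(X,\kappa)$ has the AFPP and verify that $f = 1_X$ satisfies Definition~\ref{weakUniv}. Given any $\kappa$-continuous $g: X \to X$ with $g \neq 1_X$, the AFPP supplies an approximate fixed point $x$ of $g$, so $g(x) \adjeq_{\kappa} x$; by symmetry this is $1_X(x) = x \adjeq_{\kappa} g(x)$, which is exactly what weak universality of $1_X$ demands. Note that the quantifier in Definition~\ref{weakUniv} ranges only over $g \neq 1_X$, so no separate treatment of the case $g = 1_X$ is needed in this direction.

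For the converse I would assume $1_X$ is weakly universal and show that every $\kappa$-continuous $f: X \to X$ has an approximate fixed point. Here a case split is required: if $f = 1_X$ then every point of $X$ is a genuine fixed point, hence an approximate fixed point; if $f \neq 1_X$, weak universality of $1_X$ yields $x \in X$ with $1_X(x) = x \adjeq_{\kappa} f(x)$, an approximate fixed point of $f$. This mirrors the proof of Proposition~\ref{univ-AFPP}, with $\adjeq$ in place of $\adj$.

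I expect no serious obstacle; the only points demanding care are the explicit appeal to symmetry of $\adjeq_{\kappa}$ and, in the converse, disposing of the excluded case $f = 1_X$ that the definition of weak universality does not cover. These are exactly the two places where the stronger notion of universal function (which uses $\adj$ rather than $\adjeq$, and hence fails for $1_X$ on intervals as in Example~\ref{nonUwAFPP}) breaks down, so it is worth flagging them even though each is settled in a line.
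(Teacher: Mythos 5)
Your proof is correct and follows essentially the same route as the paper's, which simply unpacks the definitions of the AFPP and weak universality of $1_X$ in a single sentence. Your version is in fact slightly more careful than the paper's, since you explicitly invoke the symmetry of $\adjeq_{\kappa}$ and dispose of the case $f = 1_X$ that the quantifier in Definition~\ref{weakUniv} excludes; both points are harmless but worth the line you give them.
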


\begin{proof}
$(X,\kappa)$ has the AFPP if and only if given 
a $(\kappa,\kappa)$-continuous $f: X \to X$,
for some $x \in X$ we
have $f(x) \adjeq_{\kappa} 1_X(x) = x$; i.e.,
if and only if $1_X$ is universal.
\end{proof}

The following is suggested by Theorem~5.7
of~\cite{BEKLL}.

\begin{prop}
\label{propPartial}
Let $(W,\kappa)$, $(X,\lambda)$, and $(Y,\mu)$ be
digital images. Let $f: W \to X$ be
$(\kappa,\lambda)$-continuous and let $g: X \to Y$
be $(\lambda, \mu)$-continuous. If $g \circ f$ is
weakly universal, then $g$ is weakly universal.
\end{prop}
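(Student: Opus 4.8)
The plan is to verify weak universality of $g$ straight from Definition~\ref{weakUniv} by pulling test maps back along $f$. So let $k \colon X \to Y$ be an arbitrary $(\lambda,\mu)$-continuous map with $k \neq g$; the goal is to produce a point $x \in X$ with $g(x) \adjeq_{\mu} k(x)$. The natural map to feed into the weak universality of $g \circ f$ is the composite $h = k \circ f \colon W \to Y$, which is $(\kappa,\mu)$-continuous by Theorem~\ref{composition}.

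First I would dispose of the case $h \neq g \circ f$. Here, since $g \circ f$ is weakly universal for $(W,Y)$, there is some $w \in W$ with $(g \circ f)(w) \adjeq_{\mu} (k \circ f)(w)$; putting $x = f(w)$ gives exactly $g(x) \adjeq_{\mu} k(x)$.

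The step I expect to be the main obstacle is that $k \neq g$ does not by itself force $k \circ f \neq g \circ f$: the maps $g$ and $k$ might differ only at points of $X$ lying outside the image $f(W)$, so that $h = g \circ f$ and the weak universality hypothesis is unavailable. The resolution is to observe that this apparently bad case is in fact the easy one. If $h = g \circ f$, then $k$ and $g$ agree at every point of $f(W)$; choosing any $w \in W$ (using that a digital image is nonempty, so $W \neq \emptyset$) and setting $x = f(w)$ yields $g(x) = k(x)$, whence $g(x) \adjeq_{\mu} k(x)$ trivially.

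Since exactly one of the two cases holds, in both we obtain the required $x$, and as $k$ was an arbitrary continuous map distinct from $g$, this shows $g$ is weakly universal for $(X,Y)$. I would present the final argument as the single dichotomy above, stressing that the coincidence $k \circ f = g \circ f$ is precisely what guarantees that $g$ and $k$ agree somewhere on the image of $f$.
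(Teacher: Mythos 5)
Your proof is correct and follows essentially the same route as the paper's: test an arbitrary continuous $k\colon X \to Y$ against $g$ by precomposing with $f$ and invoking the weak universality of $g \circ f$ at the point $x = f(w)$. You are in fact slightly more careful than the paper, which applies the hypothesis to $k \circ f$ without checking that $k \circ f \neq g \circ f$; your dichotomy explicitly fills that gap (which is easily filled, since $\adjeq_{\mu}$ includes equality, so the degenerate case is trivial).
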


\begin{proof}
Let $h: X \to Y$ be $(\lambda, \mu)$-continuous.
Since $g \circ f$ is weakly universal, there exists
$w \in W$ such that 
$g \circ f(w) \adjeq_{\mu} h \circ f(w)$, i.e.,
for $x = f(w)$ we have $g(x) \adjeq_{\mu} h(x)$.
Since $h$ was arbitrarily chosen, the assertion
follows.
\end{proof}

The following is suggested by Theorem~5.8
of~\cite{BEKLL}.

\begin{thm}
\label{compositionWU}
Let $g: (U, \mu) \to (X,\kappa)$ and 
$h: (Y, \lambda) \to (V, \nu)$ be digital
isomorphisms. Let $f: X \to Y$ be
$(\kappa,\lambda)$-continuous. Then the
following are equivalent.

{\rm (1)} $f$ is a weakly universal function for
          $(X,Y)$.
          
{\rm (2)} $f \circ g$ is weakly universal.

{\rm (3)} $h \circ f$ is weakly universal.
\end{thm}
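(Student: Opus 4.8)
The plan is to reduce all three equivalences to two structural facts about the isomorphisms $g$ and $h$, and then to transport continuous functions back and forth. First, since $g$ and $h$ are isomorphisms, the inverses $g^{-1}$ and $h^{-1}$ are continuous; hence by Theorem~\ref{composition} every composite appearing below is continuous, so weak universality is meaningful for each. Second, and crucially, an isomorphism both preserves and reflects the relation $\adjeq$: for $a,b \in Y$ we have $a \adjeq_\lambda b$ if and only if $h(a) \adjeq_\nu h(b)$, where the forward implication is just continuity of $h$, while the reverse uses continuity of $h^{-1}$ for the adjacency case and injectivity of $h$ for the equality case. I would state this observation first so that both halves of (1)~$\Leftrightarrow$~(3) can cite it. It is also convenient to note that in Definition~\ref{weakUniv} the clause $g \neq f$ is inessential, since $f(x) \adjeq_\lambda f(x)$ holds trivially; thus weak universality of $f$ amounts to the statement that for \emph{every} continuous $k : X \to Y$ there is $x$ with $f(x) \adjeq_\lambda k(x)$.

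For (1)~$\Leftrightarrow$~(2) I would use only that $g$ is a bijection with continuous inverse, since precomposition leaves the target $(Y,\lambda)$ untouched. If $f$ is weakly universal, then given continuous $k : U \to Y$ I apply weak universality of $f$ to $k \circ g^{-1} : X \to Y$ to obtain $x \in X$ with $f(x) \adjeq_\lambda k(g^{-1}(x))$; setting $u = g^{-1}(x)$ yields $f \circ g(u) \adjeq_\lambda k(u)$, so $f \circ g$ is weakly universal. Conversely, if $f \circ g$ is weakly universal, then given continuous $h' : X \to Y$ I apply weak universality of $f \circ g$ to $h' \circ g : U \to Y$ and read off the witness as $x = g(u)$. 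Because $g$ is a bijection, these two passages are mutually inverse.

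For (1)~$\Leftrightarrow$~(3) the preservation--reflection property of $h$ does the work. If $f$ is weakly universal, then given continuous $k : X \to V$ I apply weak universality of $f$ to $h^{-1} \circ k : X \to Y$, obtaining $x$ with $f(x) \adjeq_\lambda h^{-1}(k(x))$, and then push forward by $h$ (preservation) to conclude $h \circ f(x) \adjeq_\nu k(x)$. Conversely, if $h \circ f$ is weakly universal, then given continuous $h' : X \to Y$ I apply weak universality of $h \circ f$ to $h \circ h' : X \to V$ to get $h(f(x)) \adjeq_\nu h(h'(x))$, and then reflect by $h$ to descend to $f(x) \adjeq_\lambda h'(x)$. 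I expect the main obstacle to be precisely this last reflection step: continuity of $h$ alone would give only one implication, so I genuinely need $h$ to be an isomorphism in order that $h(a) \adjeq_\nu h(b)$ force $a \adjeq_\lambda b$. This is why the hypothesis requires isomorphisms and not merely continuous surjections, and isolating the reflection lemma at the outset keeps both directions of (1)~$\Leftrightarrow$~(3) clean.
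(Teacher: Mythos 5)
Your proposal is correct and follows essentially the same route as the paper: precompose with $g^{-1}$ for (1)~$\Leftrightarrow$~(2), and conjugate by $h$ and $h^{-1}$ for (1)~$\Leftrightarrow$~(3), using continuity of the inverse isomorphisms to transport $\adjeq$ in both directions. The only differences are organizational --- you isolate the preservation/reflection observation and the dispensability of the $g \neq f$ clause as preliminary remarks, and you prove (2)~$\Rightarrow$~(1) directly where the paper cites Proposition~\ref{propPartial} --- but the substance is identical.
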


\begin{proof}
(1 implies 2): Let $k: U \to Y$ be
$(\mu, \lambda)$-continuous. Since $f$ is
weakly universal, there exists $x \in X$ such
that $(k \circ g^{-1})(x) \adjeq_{\lambda} f(x)$, 
i.e., for $u=g^{-1}(x)$ we have
\[k(u) = k(g^{-1}(x)) \adjeq_{\lambda} 
(f \circ g)(g^{-1}(x)) = (f \circ g)(u).
\]
Since $k$ is arbitrary, $f \circ g$ is 
weakly universal.

(2 implies 1): This follows from
Proposition~\ref{propPartial}.

(1 implies 3): Let $m: X \to V$ be
$(\kappa,\nu)$-continuous. Since $f$ is
weakly universal, there exists $x \in X$ such
that $(h^{-1} \circ m)(x) \adjeq_{\lambda} f(x)$.
By continuity,
\[ m(x) = h \circ (h^{-1} \circ m)(x) \adjeq_{\nu} h \circ f(x).
\]
Since $m$ is arbitrary, $h \circ f$ is weakly
universal.

(3 implies 1): Let $r: X \to Y$ be
$(\kappa,\lambda)$-continuous. Since $h \circ f$
is weakly universal, there exists $x \in X$ such that
$h \circ f(x) \adjeq_{\nu} h \circ r(x)$. Thus,
\[ f(x) = h^{-1} \circ h \circ f(x)
   \adjeq_{\lambda} h^{-1} \circ h \circ r(x) = r(x).
\]
Since $r$ is arbitrary, $f$ must be weakly universal.
\end{proof}

Corollary~5.9 of~\cite{BEKLL} claims that an
isomorphism $f: (X,\kappa) \to (Y,\lambda)$ is
universal for $(X,Y)$ if and only if $(X,\kappa)$
has the AFPP. Example~\ref{nonUwAFPP} above shows
that this assertion is incorrect. However, we
have the following.

\begin{cor}
Let $f: (X,\kappa) \to (Y,\lambda)$ be an 
isomorphism. The following are equivalent.

{\rm (1)} $f$ is weakly universal for
$(X,Y)$.

{\rm (2)} $(X,\kappa)$ has the AFPP.

{\rm (3)} $(Y,\lambda)$ has the AFPP.
\end{cor}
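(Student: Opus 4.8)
The plan is to deduce everything from Theorem~\ref{1XweaklyUniv} and Theorem~\ref{compositionWU}, treating the given isomorphism $f$ not as the ``central'' continuous function of Theorem~\ref{compositionWU} but as one of the flanking isomorphisms, with an appropriate identity map playing the role of the central function. This lets the composite appearing in Theorem~\ref{compositionWU} collapse back to $f$ itself.

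First I would establish the equivalence of (1) and (2). By Theorem~\ref{1XweaklyUniv}, $(X,\kappa)$ has the AFPP if and only if $1_X$ is weakly universal for $(X,X)$. Now I would apply Theorem~\ref{compositionWU} with the central continuous function taken to be $1_X : (X,\kappa) \to (X,\kappa)$, with the pre-composing isomorphism taken to be $1_X$, and with the post-composing isomorphism taken to be $f : (X,\kappa) \to (Y,\lambda)$ itself. Since $f \circ 1_X = f$, the equivalence of statements (1) and (3) of Theorem~\ref{compositionWU} yields that $1_X$ is weakly universal for $(X,X)$ if and only if $f$ is weakly universal for $(X,Y)$. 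Chaining these two facts gives $(1) \Leftrightarrow (2)$.

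Next I would establish the equivalence of (1) and (3) symmetrically. By Theorem~\ref{1XweaklyUniv}, $(Y,\lambda)$ has the AFPP if and only if $1_Y$ is weakly universal for $(Y,Y)$. Here I would apply Theorem~\ref{compositionWU} with the central function taken to be $1_Y : (Y,\lambda) \to (Y,\lambda)$ and with the pre-composing isomorphism taken to be $f : (X,\kappa) \to (Y,\lambda)$. Since $1_Y \circ f = f$, the equivalence of statements (1) and (2) of Theorem~\ref{compositionWU} shows that $1_Y$ is weakly universal if and only if $f$ is weakly universal for $(X,Y)$, giving $(1) \Leftrightarrow (3)$. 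Combined with the previous paragraph, all three statements are equivalent.

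The main obstacle is purely one of bookkeeping: one must correctly identify which maps in Theorem~\ref{compositionWU} play which role. The tempting but unproductive move is to treat $f$ as the central continuous map; the clean argument instead uses the relevant identity ($1_X$ or $1_Y$) as the central map and lets $f$ serve as a flanking isomorphism, so that the composite collapses to $f$. I would also check the trivial hypotheses before invoking Theorem~\ref{compositionWU}: that $1_X$ is $(\kappa,\kappa)$-continuous and $1_Y$ is $(\lambda,\lambda)$-continuous, and that $1_X$, $1_Y$, and $f$ are all genuine isomorphisms, so that the theorem indeed applies.
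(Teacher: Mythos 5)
Your proof is correct, and it rests on exactly the same two ingredients as the paper's argument --- Theorem~\ref{1XweaklyUniv} and Theorem~\ref{compositionWU} --- but with the roles in Theorem~\ref{compositionWU} assigned dually. You put an identity map in the central slot and let $f$ be the flanking isomorphism, so that the composite in the theorem collapses to $f$ itself; the paper instead puts $f$ in the central slot and lets $f^{-1}$ be the flanking isomorphism, so that the composite collapses to an identity map ($f^{-1}\circ f = 1_X$ for the equivalence with (2), $f\circ f^{-1} = 1_Y$ for the equivalence with (3)), to which Theorem~\ref{1XweaklyUniv} is then applied. The two bookkeepings are logically equivalent and equally short, so your closing remark that taking $f$ as the central continuous map is ``tempting but unproductive'' is mistaken: that is precisely the paper's route, and it succeeds because the flanking map is chosen to be $f^{-1}$ rather than an identity, which is what makes the composite collapse. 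Your verification of the side hypotheses (that $1_X$ and $1_Y$ are continuous and that the flanking maps are genuine isomorphisms) is immediate and fine. Neither approach buys anything substantive over the other; yours avoids ever invoking $f^{-1}$ explicitly, while the paper's reduces both equivalences to the single statement of Theorem~\ref{1XweaklyUniv} about identity maps in the most direct way.
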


\begin{proof}
(1) $\Leftrightarrow$ (2): By Theorem~\ref{compositionWU}, $f$ is weakly
universal if and only if $f \circ f^{-1}=1_X$ is 
weakly universal, which, by 
Theorem~\ref{1XweaklyUniv} is true if and only if
$(X,\kappa)$ has the AFPP.

(1) $\Leftrightarrow$ (3): By Theorem~\ref{compositionWU}, $f$ is weakly
universal if and only if $f^{-1} \circ f=1_Y$ is 
weakly universal, which, by 
Theorem~\ref{1XweaklyUniv} is true if and only if
$(Y,\lambda)$ has the AFPP.
\end{proof}

The following generalizes Theorem~5.10
of~\cite{BEKLL} and corrects its proof (stated in
terms of Proposition~5.5 of~\cite{BEKLL}, which, we noted above, is incorrect).
\begin{thm}
Let $(X_i,\kappa_i)$ be
digital images, $1 \le i \le v$. Let
$X = \Pi_{i=1}^v X_i$. If
$(X,NP_v(\kappa_1,\ldots, \kappa_v))$ has the
AFPP, then each $(X_i,\kappa_i)$ has the AFPP.
\end{thm}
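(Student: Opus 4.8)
The plan is to argue directly, fixing an index $j$ with $1 \le j \le v$ and an arbitrary $\kappa_j$-continuous self-map $g_j : X_j \to X_j$; the goal is to produce an approximate fixed point for $g_j$. First I would lift $g_j$ to a self-map $G : X \to X$ of the product by letting it act as $g_j$ in the $j$-th coordinate and as the identity in all others, i.e.
\[
G(x_1, \ldots, x_v) = (x_1, \ldots, x_{j-1}, g_j(x_j), x_{j+1}, \ldots, x_v).
\]
The first substantive step is to verify that $G$ is $NP_v(\kappa_1, \ldots, \kappa_v)$-continuous. By the characterization of digital continuity (a map is continuous precisely when it carries $\kappa$-adjacent points to $\lambda$-adjacent-or-equal points), it suffices to show that $p \adj_{NP_v} p'$ implies $G(p) \adjeq_{NP_v} G(p')$. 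If $p \adj_{NP_v} p'$, then in each coordinate $i$ we have $x_i \adjeq_{\kappa_i} x_i'$; the coordinates $i \ne j$ are left unchanged by $G$ and so remain adjacent-or-equal, while in coordinate $j$ the $\kappa_j$-continuity of $g_j$ gives $g_j(x_j) \adjeq_{\kappa_j} g_j(x_j')$. Hence every coordinate of $G(p)$ is $\kappa_i$-adjacent-or-equal to the corresponding coordinate of $G(p')$, which is exactly the defining condition for $G(p) \adjeq_{NP_v} G(p')$.

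Next, since $(X, NP_v(\kappa_1, \ldots, \kappa_v))$ has the AFPP, the continuous map $G$ has an approximate fixed point $p = (x_1, \ldots, x_v)$, i.e.\ $G(p) \adjeq_{NP_v} p$. The key observation is that $G(p)$ and $p$ agree in every coordinate except possibly the $j$-th, where they read $g_j(x_j)$ and $x_j$ respectively. I would then split into the two cases packaged in $\adjeq_{NP_v}$: if $G(p) = p$ then $g_j(x_j) = x_j$; if instead $G(p) \adj_{NP_v} p$ then, because the two points differ in at most the single coordinate $j$, the $NP_v$-adjacency forces $g_j(x_j) \adj_{\kappa_j} x_j$. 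In either case $g_j(x_j) \adjeq_{\kappa_j} x_j$, so $x_j$ is an approximate fixed point of $g_j$. Since $g_j$ was an arbitrary $\kappa_j$-continuous self-map, $(X_j, \kappa_j)$ has the AFPP, and since $j$ was arbitrary the conclusion holds for every factor.

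I expect the main obstacle to be the coordinatewise bookkeeping in the two verifications rather than any deep idea: one must use that the $NP_v$ condition is genuinely coordinatewise (each coordinate adjacent-or-equal, the ``at most $v$'' count being automatic when there are only $v$ coordinates), and that collapsing to the single differing coordinate $j$ legitimately promotes $NP_v$-adjacency of $G(p)$ and $p$ to $\kappa_j$-adjacency of $g_j(x_j)$ and $x_j$. It is also worth noting that the $G(p) = p$ subcase must be handled explicitly, since it corresponds exactly to $g_j(x_j) = x_j$ and would be lost under a formulation using strict adjacency alone; this is precisely the flexibility supplied by the \emph{approximate} (adjacent-or-equal) notion.
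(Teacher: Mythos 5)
Your proposal is correct and takes essentially the same approach as the paper: lift a self-map of a factor to an $NP_v$-continuous self-map of the product, apply the product's AFPP to get $p \adjeq_{NP_v(\kappa_1,\ldots,\kappa_v)} G(p)$, and read off adjacency-or-equality in the relevant coordinate. The only difference is cosmetic: the paper handles all factors at once by taking the product $\Pi_{i=1}^v f_i$ of arbitrary continuous maps and citing \cite{Bx17Gen} for its continuity, whereas you pad a single $g_j$ with identities and verify $NP_v$-continuity coordinatewise by hand, which makes your version self-contained at the cost of treating one factor at a time.
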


\begin{proof}
Let $f_i: X_i \to X_i$ be $(\kappa_i,\kappa_i)$-continuous, $1 \le i \le v$.
Then the product function 
$f = \Pi_{i=1}^v f_i: X \to X$ is
$(NP_v(\kappa_1,\ldots, \kappa_v),NP_v(\kappa_1,\ldots, \kappa_v))$-continuous~\cite{Bx17Gen}. By Theorem~\ref{1XweaklyUniv}, $1_X$ is weakly
universal, so there exists
$p=(x_1,\ldots, x_v) \in X$, $x_i \in X_i$, such that
\[p \adjeq_{NP_v(\kappa_1,\ldots, \kappa_v)} f(p) =
  (f_1(x_1), \ldots, f_v(x_v)),
\]
hence $x_i \adjeq_{\kappa_i} f_v(x_i)$ for all~$i$. Since $f_i$ was taken arbitrarily, the
conclusion follows.
\end{proof}

\section{Digital expansions in~\cite{Jyoti-Rani17}}
The paper~\cite{Jyoti-Rani17} contains several
assertions that are incorrect or incorrectly proven,
limited, or can be improved.

\subsection{Digital expansive mappings}
\begin{definition}
{\rm \cite{Jyoti-Rani17}}
Let $(X,d,\kappa)$ be a complete digital metric 
space. Let $T: X \to X$. If
$d(T(x),T(y)) \ge k \, d(x,y)$ for all $x,y \in X$
and some $k>1$, then $T$ is a
{\em digital expansive mapping}.
\end{definition}

\begin{thm}
{\rm \cite{Jyoti-Rani17}}
\label{expansiveThm}
If $T: X \to X$ is a digital expansive mapping on
complete digital metric space $(X,d,\kappa)$ and
$T$ is onto, then $T$ has a fixed point.
\end{thm}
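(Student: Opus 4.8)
The plan is to observe that the adjacency $\kappa$ plays no role in this statement — it is a purely metric assertion — and to prove it by the classical device for expansive self-maps: show that $T$ is a bijection, that its inverse is a contraction, and then invoke the Banach fixed point theorem on the complete metric space $(X,d)$.

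First I would establish that $T$ is injective. If $T(x)=T(y)$, then the expansive inequality gives $0 = d(T(x),T(y)) \ge k\,d(x,y) \ge 0$, so $d(x,y)=0$ and hence $x=y$. Combined with the hypothesis that $T$ is onto, this shows $T$ is a bijection, so the inverse $T^{-1}\colon X \to X$ is well defined.

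Next I would show $T^{-1}$ is a contraction. Given $u,v \in X$, set $x=T^{-1}(u)$ and $y=T^{-1}(v)$; then $T(x)=u$, $T(y)=v$, and the expansive inequality reads $d(u,v) \ge k\,d(T^{-1}(u),T^{-1}(v))$. Since $k>1$, this gives $d(T^{-1}(u),T^{-1}(v)) \le (1/k)\,d(u,v)$ with contraction constant $1/k < 1$. Because $(X,d)$ is complete, the Banach fixed point theorem yields a unique $x^* \in X$ with $T^{-1}(x^*)=x^*$, and applying $T$ to both sides gives $T(x^*)=x^*$, the desired fixed point. Equivalently, one can make the use of completeness explicit: choose any $x_0$, use surjectivity to build a backward orbit with $T(x_{n+1})=x_n$, verify that $d(x_n,x_{n+1}) \le (1/k)^n\,d(x_0,x_1)$ so that $\{x_n\}$ is Cauchy, and pass to the limit using the (Lipschitz, hence continuous) map $T^{-1}$.

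The step I would flag as the only real subtlety is that $T$ itself is neither a contraction nor necessarily continuous, so one cannot iterate $T$ directly; the argument must be \emph{inverted} and carried out on $T^{-1}$, whose good behavior is exactly what the hypotheses $k>1$ and surjectivity supply. I would close with a remark consistent with the theme of this paper: the hypothesis that $(X,d,\kappa)$ is a digital metric space does no work beyond providing a complete metric, and under the ``usual'' conditions of Proposition~\ref{eventuallyConst} the statement is nearly vacuous when $X$ is finite, since an expansive bijection of a finite space forces $X$ to be a single point (a pair realizing the diameter $M>0$ would map to a pair at distance at least $kM>M$, a contradiction).
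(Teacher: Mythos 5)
Your proof is correct. Note that the paper itself offers no proof of Theorem~\ref{expansiveThm}: it is quoted from the source it criticizes and is accepted as true, with the paper's contribution being Theorem~\ref{expansiveLimit}, which shows the hypotheses are rarely satisfiable. Your argument --- injectivity from the expansive inequality, hence bijectivity, hence a well-defined inverse that is a $1/k$-contraction, then Banach's fixed point theorem on the complete space $(X,d)$ --- is the standard and correct route, and your identification of the key subtlety (one must iterate $T^{-1}$, not $T$) is exactly right. Your closing observation that a finite $X$ with more than one point admits no expansive self-map (a pair realizing the diameter would be sent to a pair at distance exceeding the diameter) is precisely the content of the paper's Theorem~\ref{expansiveLimit}, so your remark independently rediscovers the paper's main point about this result.
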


However, in practice, the hypotheses of 
Theorem~\ref{expansiveThm} often cannot be
satisfied, as shown in the following, which
combines Theorems~4.8 and~4.9 of~\cite{BxSt19}.

\begin{thm}
\label{expansiveLimit}
Let $(X,d,\kappa)$ be a digital metric space of more than one point. If there exist $x_0,y_0 \in X$
such that either
\begin{itemize}
    \item $d(x_0,y_0) = diam \, X > 0$, or
    \item $d(x_0,y_0) = \min \{d(x,y) \, | \,
           x,y \in X, x \neq y\}$,
\end{itemize}
then there is no self-map $T: X \to X$ that is
a digital expansive mapping and is onto.
\end{thm}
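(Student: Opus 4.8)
The plan is to argue by contradiction, assuming that some $T: X \to X$ is both onto and a digital expansive mapping with expansion constant $k > 1$, and to treat the two hypothesized cases separately. In each case the strategy is the same: identify a pair of points whose mutual distance is extremal (either the diameter or the minimal positive separation), and show that expansiveness forces a distance that violates that extremality. The choice of which pair to expand, and whether surjectivity is invoked, differs between the two cases.

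First I would dispose of the case in which $d(x_0, y_0) = diam\, X > 0$. Here I apply the expansive inequality directly to the pair $x_0, y_0$, obtaining $d(T(x_0), T(y_0)) \ge k\, d(x_0, y_0) = k \cdot diam\, X$. Since $k > 1$ and $diam\, X > 0$, the right-hand side strictly exceeds $diam\, X$; but $T(x_0), T(y_0) \in X$, so by the definition of the diameter $d(T(x_0), T(y_0)) \le diam\, X$, a contradiction. Note that this case does not use surjectivity of $T$ at all; it rules out \emph{any} expansive self-map, onto or not.

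Next I would treat the case in which $d(x_0, y_0) = m$, where $m = \min\{d(x,y) \mid x, y \in X,\ x \neq y\}$. Here surjectivity is essential. Because $m > 0$ we have $x_0 \neq y_0$, and since $T$ is onto there exist $a, b \in X$ with $T(a) = x_0$ and $T(b) = y_0$; as $T(a) \neq T(b)$, necessarily $a \neq b$. Applying expansiveness to the pair $a, b$ gives $m = d(x_0, y_0) = d(T(a), T(b)) \ge k\, d(a, b)$, so $d(a, b) \le m/k < m$ because $k > 1$. Since $a$ and $b$ are distinct, this contradicts the fact that $m$ is the minimum distance between distinct points of $X$.

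I do not expect a genuine obstacle; the argument is short in both cases. The only points requiring care are bookkeeping: confirming that $m > 0$ (which holds because $d$ is a metric, so distinct points have positive distance) so that $x_0 \neq y_0$, and then using surjectivity to produce the preimages $a, b$ and verifying $a \neq b$ before invoking minimality. The conceptual content is simply the observation that the two hypotheses single out, respectively, the largest and the smallest achievable positive distances in $X$, neither of which an expansive onto map can consistently enlarge.
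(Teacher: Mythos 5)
Your proof is correct, and both cases are handled cleanly: the diameter case needs only expansiveness, while the minimum-distance case correctly uses surjectivity to pull back the extremal pair. The paper itself states this theorem without proof (it is imported from Theorems~4.8 and~4.9 of the cited earlier work \cite{BxSt19}), but your argument is the natural one for each half and there is nothing to fault in it.
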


In practice, a digital image $(X,\kappa)$ typically
consists of a finite set of more than 1 point; or,
should a metric $d$ be used, it is typically an
$\ell_p$ metric. Under such circumstances,
by Theorem~\ref{expansiveLimit} a digital metric 
space $(X,d,\kappa)$ cannot have a 
self-map that is both a digital 
expansive mapping and onto.

\subsection{1st generalization of expansive mappings}
Theorem~3.4 of~\cite{Jyoti-Rani17} states the
following.

\begin{thm}
\label{expansiveSumLimit}
Let, $(X,d,\kappa)$ be a complete digital metric space and $T: X \to X$ be an onto self map. Let $T$ satisfy
$d(T(x), T(y)) \ge k \, [d(x, T(x)) + d(y,T(y))]$  where $k\ge 1/2$, for all $x,y \in X$.
Then $T$ has a fixed point.
\end{thm}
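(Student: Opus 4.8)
The plan is to exploit the inequality on the diagonal $x=y$, where it turns out to be self-defeating. First I would fix an arbitrary $x \in X$ and substitute $y=x$ into the hypothesis $d(T(x),T(y)) \ge k\,[d(x,T(x)) + d(y,T(y))]$. The left-hand side collapses to $d(T(x),T(x)) = 0$, while the right-hand side becomes $2k\,d(x,T(x))$, so we obtain $0 \ge 2k\,d(x,T(x))$.

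Next, since $k \ge 1/2 > 0$ and $d$ is a metric (hence nonnegative), this forces $d(x,T(x)) = 0$, and therefore $T(x) = x$. Because $x$ was arbitrary, every point of $X$ is a fixed point; indeed $T = 1_X$. In particular $T$ certainly has a fixed point, which establishes the assertion, but in a much stronger and more revealing form than the one stated.

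I would then emphasize where the real content lies: the conclusion holds for reasons that render the remaining hypotheses almost vacuous. The completeness of $(X,d)$ and the surjectivity of $T$ play no role whatsoever, and the expansive-type inequality does not merely constrain $T$ but collapses the entire class of admissible self-maps to the single map $1_X$. Thus Theorem~\ref{expansiveSumLimit}, like several other assertions discussed here, reduces to triviality under any hypotheses that can actually be met.

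The main obstacle is not computational; the derivation is a one-line substitution. The difficulty is interpretive. The point worth making is that the inequality, read on the diagonal, is incompatible with $T$ displacing any point at all, so the theorem as stated describes only the identity. I expect the challenge to lie in framing this collapse clearly, and in contrasting it with the evident intent of~\cite{Jyoti-Rani17}, rather than in exhibiting a fixed point.
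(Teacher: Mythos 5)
Your argument is exactly the paper's: the paper follows Theorem~\ref{expansiveSumLimit} with a proposition showing that any such $T$ must be the identity, proved by the same diagonal substitution $y=x$ yielding $0 \ge 2k\,d(x,T(x))$. Your interpretive remarks about the result reducing to triviality also match the paper's stated point.
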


But Theorem~\ref{expansiveSumLimit} reduces to a
trivial statement, as we see in the following.

\begin{prop}
A map $T$ as in Theorem~\ref{expansiveSumLimit}
must be the identity map.
\end{prop}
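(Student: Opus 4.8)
The plan is to exploit the fact that the defining inequality is assumed to hold for \emph{all} pairs $(x,y) \in X \times X$, including the diagonal case $y = x$. First I would substitute $y = x$ into the hypothesis $d(T(x), T(y)) \ge k\,[d(x,T(x)) + d(y,T(y))]$. The left-hand side then collapses to $d(T(x),T(x)) = 0$ by the metric axiom $d(p,p) = 0$, while the right-hand side becomes $k\,[d(x,T(x)) + d(x,T(x))] = 2k\,d(x,T(x))$. This gives the single relation $0 \ge 2k\,d(x,T(x))$.

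Next I would observe that the hypothesis $k \ge 1/2$ forces $k > 0$, so $2k > 0$; and since $d$ is a metric, $d(x,T(x)) \ge 0$, so the right-hand side is nonnegative. Combining the two inequalities yields $2k\,d(x,T(x)) = 0$, and dividing by the positive constant $2k$ gives $d(x,T(x)) = 0$. By the identity-of-indiscernibles axiom, this means $T(x) = x$. As $x \in X$ was arbitrary, $T = 1_X$, the identity map, which is the desired conclusion.

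There is no genuine obstacle here: the whole content is the observation that evaluating the expansive-sum condition on the diagonal is self-defeating, so the hypothesis can be satisfied only by the identity. The only points that warrant explicit care are that $k > 0$ (guaranteed by $k \ge 1/2$), so that one may legitimately cancel the factor $2k$ and conclude $d(x,T(x)) = 0$ rather than a mere nonstrict inequality; and the remark that completeness of $(X,d)$, the adjacency $\kappa$, and the surjectivity of $T$ are never used. This supports the paper's thesis that Theorem~\ref{expansiveSumLimit} reduces to a triviality.
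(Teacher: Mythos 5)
Your proposal is correct and is essentially identical to the paper's own proof: both set $y=x$ in the hypothesis to obtain $0 = d(T(x),T(x)) \ge 2k\,d(x,T(x))$ and conclude $d(x,T(x))=0$ for all $x$. Your added remarks about $k>0$ and the unused hypotheses are fine but do not change the argument.
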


\begin{proof}
For such a map, we have
\[ 0 = d(T(x), T(x)) \ge k \, [d(x,T(x)) + d(x, T(x))] = 2k \, d(x, T(x)),
\]
so $d(x, T(x))=0$ for all $x \in X$.
\end{proof}

\subsection{2nd generalization of expansive mappings}
\label{2ndGen}
Theorem~3.5 of~\cite{Jyoti-Rani17} asserts the
following.
\begin{quote}
    Let $(X,d,\kappa)$ be a complete digital metric 
    space and let $T: X \to X$ be onto and 
    continuous. Let
    \[ d(T(x), T(y)) \ge k \mu(x,y) \]
    for all $x,y \in X$, where $k>1$ and
    \[ \mu(x,y) \in \left \{d(x,y), 
    \frac{d(x,T(x)) + d(y,T(y))}{2},
    \frac{d(x,T(y)) + d(y,T(x))}{2}
    \right \}.
    \]
    Then $T$ has a fixed point.
\end{quote}

The argument given as proof for this assertion
has flaws,
including the use in its proof of an assumption
that $k<2$, not stated in the hypotheses; and an 
incorrect application of the triangle inequality (where we need the reverse of the inequality to proceed as the authors have done) in the attempt to reduce Case~3 to Case~2. Thus, the assertion as stated must be regarded as unproven.
Also, the argument given for proof clarifies that the continuity assumption
is of the $\varepsilon-\delta$ type, not digital. In the following, we obtain a version of this assertion
with no continuity assumption, but with an additional assumption about $X$ or $d$ and with greater restriction on the possible values of $\mu(x,y)$.

\begin{thm}
\label{correct2nd}
    Let $(X,d,\kappa)$ be a digital metric 
    space, such that $X$ is finite or $d$ is an $\ell_p$ metric. Let $T: X \to X$ be onto. Suppose
    \[ d(T(x), T(y)) \ge k \mu(x,y) \]
    for all $x,y \in X$, where $1<k<2$ and
    \[ \mu(x,y) \in \left \{d(x,y), 
    \frac{d(x,T(x)) + d(y,T(y))}{2}
    \right \}.
    \]
    Then $T$ has a fixed point.
\end{thm}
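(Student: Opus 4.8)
The plan is to argue by contradiction, exploiting the fact that the hypothesis on $X$ or $d$ forces a positive lower bound on distances between distinct points, and combining this with the ontoness of $T$ to build a backward orbit along which the given inequality forces consecutive distances to shrink geometrically.

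First I would record that, since $X$ is finite or $d$ is an $\ell_p$ metric, the note following Proposition~\ref{eventuallyConst} supplies a constant $a>0$ with $d(x,y)>a$ for all distinct $x,y\in X$. Next, suppose toward a contradiction that $T$ has no fixed point, so that $d(x,T(x))>a$ for every $x\in X$. Using that $T$ is onto, I would choose a sequence $\{x_n\}_{n=0}^{\infty}$ with $T(x_{n+1})=x_n$ for all $n\ge 0$; since $T$ has no fixed point we have $x_{n+1}\neq x_n$, so writing $d_n=d(x_n,x_{n+1})$ we obtain $d_n>a>0$ for every $n$.

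The heart of the argument is to apply the hypothesis to the pair $(x_{n+1},x_n)$. Here $d(T(x_{n+1}),T(x_n))=d(x_n,x_{n-1})=d_{n-1}$, while the two admissible values of $\mu(x_{n+1},x_n)$ are $d_n$ and $\tfrac{1}{2}(d_n+d_{n-1})$. In the first case $d_{n-1}\ge k\,d_n$ gives $d_n\le \tfrac{1}{k}\,d_{n-1}$; in the second, $d_{n-1}\ge \tfrac{k}{2}(d_n+d_{n-1})$ rearranges (using $k<2$, so that $2-k>0$) to $d_n\le \tfrac{2-k}{k}\,d_{n-1}$. Setting $r=\max\{1/k,(2-k)/k\}$, the constraints $1<k<2$ yield $0<r<1$, so in either case $d_n\le r\,d_{n-1}$, and hence $d_n\le r^{n}d_0\to 0$. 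This contradicts $d_n>a>0$, so $T$ must have a fixed point.

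I expect the main obstacle to be the case analysis on $\mu$: one must verify that \emph{both} admissible values produce a genuine contraction factor strictly below $1$, and this is exactly where the two-sided bound $1<k<2$ is essential. The lower bound $k>1$ makes both $1/k<1$ and $(2-k)/k<1$, while the upper bound $k<2$ is needed both to keep $2-k>0$ (so that the rearrangement in the second case is valid) and to guarantee $r<1$. This also clarifies why the corrected statement restricts to $1<k<2$ and drops the third option for $\mu$, whose presence is what caused the reverse-triangle-inequality difficulty noted above.
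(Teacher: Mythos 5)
Your proof is correct, but it takes a genuinely different route from the one the paper actually uses. You build an infinite backward orbit $T(x_{n+1})=x_n$ and show that the consecutive distances $d_n=d(x_n,x_{n+1})$ contract by the factor $r=\max\{1/k,(2-k)/k\}<1$, contradicting the uniform lower bound $a$; this is essentially the ``suitable modification of the analog in \cite{Jyoti-Rani17}'' that the paper alludes to but declines to write out. The paper's own argument is a one-step version of the same idea: it takes a pair $x_0,y_0$ realizing the \emph{minimum} positive distance $m$ (which exists because $X$ is finite or $d$ is an $\ell_p$ metric), pulls back once by surjectivity to $x',y'$ with $T(x')=x_0$, $T(y')=y_0$, notes that under the no-fixed-point assumption both admissible values of $\mu(x',y')$ are at least $m$, and concludes $m=d(T(x'),T(y'))\ge k\mu(x',y')\ge km$, a contradiction. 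The trade-off is instructive: the paper's argument needs only $k>1$ (the hypothesis $k<2$ plays no role in it), and requires the minimum distance to be attained, whereas your iterative argument only needs a positive lower bound on distances but genuinely uses $k<2$ to make the second case of $\mu$ yield a contraction factor below $1$. Both are valid under the stated hypotheses; your case analysis (including the check that $(2-k)/k<1$ exactly when $k>1$) is carried out correctly.
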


\begin{proof}
A proof can be given via suitable
modification of its analog in~\cite{Jyoti-Rani17}.
However, a simpler argument is as follows.

Without loss of generality, $|X| > 1$.
Since $X$ is finite or $d$ is an $\ell_p$ metric,
there exist $x_0,y_0 \in X$ such that
\[ m = d(x_0,y_0) = \min\{d(x,y) \, | \, x,y \in X,
       x \neq y\} > 0.
\]
Since $T$ is onto, there exist $x',y' \in X$ such
that $T(x')=x_0$ and $T(y')=y_0$.

Suppose $T$ has no fixed point. Then for all
$x,y \in X$, $d(x,T(x)) \ge m$ and 
$d(y, T(y)) \ge m$; hence $\mu(x,y) \ge m$.
Therefore, 
\[m=d(x_0,y_0) = d(T(x'), T(y')) \ge k \mu(x',y')
  \ge km,
\]
a contradiction. Therefore, $T$ must have a fixed
point.
\end{proof}

\subsection{3rd generalization of expansive mappings}
The next assertion of~\cite{Jyoti-Rani17} 
is flawed in ways similar to the assertion discussed in section~\ref{2ndGen}.
Asserted as Theorem~3.6 of~\cite{Jyoti-Rani17} is the following.
\begin{quote}
    Let $(X,d,\kappa)$ be a complete digital
    metric space.
    Let $T$ be an onto self-map of $X$ that
    is continuous. Let $k>1$ and suppose $T$
    satisfies
    \[ d(T(x),T(y)) \ge k \mu(x,y) \mbox{ for all } x,y \in X,
    \]
    where $\mu(x,y)$
    belongs to
    \[ \left \{ d(x,y), \frac{d(x,T(x))+d(y,T(y))}{2}, d(x,T(y)), d(y, T(x))
    \right \}.
    \]
    Then $T$ has a fixed point.
\end{quote}

Observe the following.
\begin{itemize}
    \item As above, the continuity used for the proof
          of this assertion is of the
          $\varepsilon - \delta$ kind, not digital 
          continuity.
    \item As above, the argument given in proof for this assertion requires $1 < k < 2$.
    \item As above, the authors attempt to establish a Cauchy sequence, and in doing so, they incorrectly reverse the triangle inequality
    in order to reduce the 3rd case considered to the 2nd case.
    \end{itemize}
    
Thus, as stated, the assertion presented as Theorem~3.6 of~\cite{Jyoti-Rani17} must be regarded 
as unproven. Note that Theorem~\ref{correct2nd} above
is a reasonable correct modification of 
this assertion.

\subsection{Examples  of~\cite{Jyoti-Rani17}}
In Examples 3.8, 3.9, 3.16, and 3.17 of~\cite{Jyoti-Rani17}, the authors lose track
of the standard assumption that a digital image $X$
is a subset of $\Z^n$. In each of these examples, they write of an unspecified $X$ using
functions that clearly place $X$ in $\R$, but not clearly in $\Z$.

\subsection{$\alpha - \psi$ expansive maps}
In the following, we let $\Psi$ be the set of functions~\cite{SametEtal}
$\psi: [0,\infty) \to [0,\infty)$ such that
\begin{itemize}
    \item $\sum_{n=1}^{\infty}\psi^n(t) < \infty$ for each $t>0$, where $\psi^n$ is the $n$-th iterate of $\psi$ (\cite{Jyoti-Rani17} misquotes this requirement as $\psi^n(t) < \infty$ for each $t>0$), and
    \item $\psi$ is non-decreasing.
\end{itemize}

\begin{definition}
\label{alphaPsiExpansion}
{\rm \cite{Jyoti-Rani17}}
Let $(X,d,\kappa)$ be a digital metric space.
Let $T: X \to X$. $T$ is a
{\em digital $\alpha-\psi$ expansive mapping} if
$\alpha: X \times X \to [0,\infty)$, 
$\psi \in \Psi$, and
for all $x,y \in X$,
\[ \psi(d(T(x), T(y))) \ge
\alpha(x,y) d(x,y).
\]
\end{definition}

\begin{definition}
\label{admissible}
{\rm \cite{Jyoti-Rani17}}
Let $T: X \to X$. Let
$\alpha: X \times X \to [0,\infty)$. 
$T$ is {\em $\alpha$-admissible} if $\alpha(x,y) \ge 1$ implies
$\alpha(T(x),T(y)) \ge 1$
\end{definition}

\begin{thm}
{\rm ~\cite{Jyoti-Rani17}}
\label{alphaPsiExpThm}
    Let $(X,d,\kappa)$ be a complete digital metric space. Let $T: X \to X$ be a bijective, digital
    $\alpha-\psi$ expansion mapping such that
    \begin{itemize}
        \item $T^{-1}$ is $\alpha$-admissible;
        \item There exists $x_0 \in X$ such that $\alpha(x_0,T^{-1}(x_0)) \ge 1$; and
        \item $T$ is digitally continuous.
    \end{itemize}
    Then $T$ has a fixed point.
\end{thm}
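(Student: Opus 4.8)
The plan is to convert the expansion hypothesis on $T$ into a $\psi$-contraction estimate for $T^{-1}$ and then run the usual Picard-iteration argument on $T^{-1}$. First I would exploit bijectivity: substituting $x = T^{-1}(u)$ and $y = T^{-1}(v)$ into the defining inequality $\psi(d(T(x),T(y))) \ge \alpha(x,y)\,d(x,y)$ yields
\[
\psi(d(u,v)) \ge \alpha(T^{-1}(u),T^{-1}(v))\,d(T^{-1}(u),T^{-1}(v))
\]
for all $u,v \in X$. Hence, whenever $\alpha(T^{-1}(u),T^{-1}(v)) \ge 1$, we get $d(T^{-1}(u),T^{-1}(v)) \le \psi(d(u,v))$, which is exactly a $\psi$-contraction bound for $S := T^{-1}$.

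Next I would build the orbit $x_{n+1} = T^{-1}(x_n) = S(x_n)$ from the $x_0$ supplied in the hypotheses, so that $x_n = S^n(x_0)$ and $T(x_{n+1}) = x_n$. The hypothesis gives $\alpha(x_0,x_1) = \alpha(x_0,T^{-1}(x_0)) \ge 1$, and since $T^{-1}$ is $\alpha$-admissible, an easy induction propagates this to $\alpha(x_n,x_{n+1}) \ge 1$ for every $n$. Feeding $u = x_n$, $v = x_{n+1}$ into the contraction bound then gives $d(x_{n+1},x_{n+2}) \le \psi(d(x_n,x_{n+1}))$. Writing $d_n = d(x_n,x_{n+1})$ and using that $\psi$ is non-decreasing, induction produces $d_n \le \psi^n(d_0)$; the summability $\sum_n \psi^n(d_0) < \infty$ (valid once $d_0 > 0$, while $d_0 = 0$ already makes $x_0$ a fixed point) forces $\sum_n d_n < \infty$, so $\{x_n\}$ is Cauchy. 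Completeness then delivers a limit $x^* = \lim_n x_n$.

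The last step, and the one I expect to be the main obstacle, is extracting a fixed point from $x^*$. Since $T(x_{n+1}) = x_n \to x^*$ and also $x_{n+1} \to x^*$, passing the limit through $T$ would give $T(x^*) = x^*$---but this interchange requires metric ($\varepsilon-\delta$) continuity of $T$, whereas the stated hypothesis is \emph{digital} continuity, which in general does not permit swapping $T$ with a metric limit. This is precisely the conflation of continuity notions flagged earlier in the paper. I would resolve it in one of two ways: either read the continuity hypothesis as metric continuity (matching the argument actually intended in~\cite{Jyoti-Rani17}), or, more in keeping with the present paper's viewpoint, impose the realistic condition that distinct points of $X$ are bounded away from one another (e.g.\ $X$ finite or $d$ an $\ell_p$ metric). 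Under the latter, Proposition~\ref{eventuallyConst} makes the Cauchy sequence eventually constant, so $x_{m+1} = x_m$ for some $m$; then $S(x_m) = x_m$, i.e.\ $T(x_m) = x_m$, and the fixed point is obtained with no continuity hypothesis at all.
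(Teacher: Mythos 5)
Your proposal matches the paper's treatment essentially step for step: the paper likewise builds the orbit $x_{n+1}=T^{-1}(x_n)$, propagates $\alpha(x_n,x_{n+1})\ge 1$ by admissibility, derives $d(x_n,x_{n+1})\le\psi^n(d(x_0,x_1))$ to get a Cauchy sequence, and --- exactly as you observe --- points out that the ``digitally continuous'' hypothesis cannot close the argument (the cited proof silently uses $\varepsilon$--$\delta$ continuity), so it instead drops continuity, assumes $X$ finite or $d$ an $\ell_p$ metric, and invokes Proposition~\ref{eventuallyConst} to make the sequence eventually constant. Your identification of the continuity conflation and your proposed repair coincide with the paper's own corrected theorem and proof.
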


Despite the use of ``digitally continuous" in the statement
of Theorem~\ref{alphaPsiExpThm}, the continuity assumption used in its proof
is of the $\varepsilon-\delta$ variety.
In fact, the assumption is unnecessary if we assume additional common conditions, as in the following.

\begin{thm}
Let $(X,d,\kappa)$ be a digital metric space, 
where $X$ is finite or $d$ is an $\ell_p$ metric.
Let $T: X \to X$ be a bijective, digital
    $\alpha-\psi$ expansion mapping such that
    \begin{itemize}
        \item $T^{-1}$ is $\alpha$-admissible;
        \item There exists $x_0 \in X$ such that $\alpha(x_0,T^{-1}(x_0)) \ge 1$; and
    \end{itemize}
    Then $T$ has a fixed point.
\end{thm}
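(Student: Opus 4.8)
The plan is to run the standard $\alpha$-$\psi$ contraction argument, but applied to the inverse map $S = T^{-1}$, and to use Proposition~\ref{eventuallyConst} in place of the (spurious $\varepsilon$-$\delta$) continuity hypothesis that the proof in~\cite{Jyoti-Rani17} relied upon. First I would rewrite the expansion inequality as a contraction-type inequality for $S$. Since $T$ is bijective, substituting $x = S(a)$ and $y = S(b)$ into $\psi(d(T(x),T(y))) \ge \alpha(x,y)\, d(x,y)$, so that $T(x)=a$ and $T(y)=b$, yields for all $a,b \in X$
\[ \alpha(S(a),S(b))\, d(S(a),S(b)) \le \psi(d(a,b)). \]

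Next I would build the orbit $x_{n+1} = S(x_n) = T^{-1}(x_n)$ from the given $x_0$, and use the $\alpha$-admissibility of $S = T^{-1}$ to propagate the inequality along the orbit. From $\alpha(x_0, S(x_0)) = \alpha(x_0, x_1) \ge 1$ together with admissibility of $S$, an induction gives $\alpha(x_n, x_{n+1}) \ge 1$ for every $n$. Setting $a = x_n$ and $b = x_{n+1}$ in the displayed inequality, and discarding the factor $\alpha(x_{n+1}, x_{n+2}) \ge 1$, gives $d(x_{n+1}, x_{n+2}) \le \psi(d(x_n, x_{n+1}))$; since $\psi$ is non-decreasing, a second induction produces $d(x_n, x_{n+1}) \le \psi^n(d(x_0, x_1))$. (If $d(x_0, x_1) = 0$ then $x_0 = S(x_0)$, whence $T(x_0) = x_0$ and we are done, so we may assume $d(x_0, x_1) > 0$.)

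Then I would show $\{x_n\}$ is Cauchy: for $m > n$, the triangle inequality gives $d(x_n, x_m) \le \sum_{k=n}^{m-1} \psi^k(d(x_0,x_1)) \le \sum_{k=n}^{\infty} \psi^k(d(x_0,x_1))$, and the defining summability property $\sum_{k=1}^{\infty} \psi^k(t) < \infty$ of members of $\Psi$ forces this tail to tend to $0$ as $n \to \infty$. Finally, since $X$ is finite or $d$ is an $\ell_p$ metric, Proposition~\ref{eventuallyConst} shows the Cauchy sequence is eventually constant, so $x_{m+1} = x_m$ for some $m$. But $x_{m+1} = S(x_m) = T^{-1}(x_m)$, hence $T^{-1}(x_m) = x_m$, and applying $T$ gives $T(x_m) = x_m$, the desired fixed point.

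I expect the only genuinely delicate point to be the inversion step at the outset: one must check that bijectivity of $T$ legitimately converts the expansion estimate for $T$ into a contraction estimate for $S = T^{-1}$, and recognize that the admissibility hypothesis is imposed on $T^{-1}$ (rather than on $T$) precisely so that the condition $\alpha \ge 1$ propagates along the orbit of $S$. Everything after that is the routine telescoping-plus-summability Cauchy estimate, with the discreteness of the space, via Proposition~\ref{eventuallyConst}, supplying the conclusion that continuity was (incorrectly) asked to supply in the original argument.
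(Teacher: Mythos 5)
Your proposal is correct and follows essentially the same route as the paper: build the orbit $x_{n+1}=T^{-1}(x_n)$, propagate $\alpha(x_n,x_{n+1})\ge 1$ by admissibility of $T^{-1}$, deduce $d(x_n,x_{n+1})\le\psi^n(d(x_0,x_1))$ from the expansion inequality, and conclude via the summability of $\sum\psi^n(t)$ together with Proposition~\ref{eventuallyConst}. Your explicit recasting of the expansion estimate as a contraction estimate for $S=T^{-1}$, the treatment of the degenerate case $d(x_0,x_1)=0$, and the written-out telescoping Cauchy bound are only presentational refinements of the paper's argument.
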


\begin{proof}
Our argument borrows from its analog in~\cite{Jyoti-Rani17}.

By hypothesis, there exists $x_0 \in X$
such that $\alpha(x_0,T^{-1}(x_0)) \ge 1$.
By induction, we obtain $S=\{x_n\}_{n=0}^{\infty}$ such that
$x_{n+1}=T^{-1}(x_n)$ for $n>0$.

Since $T^{-1}$ is $\alpha$-admissible, by induction
we have
\[\alpha(x_n,x_{n+1}) =
\alpha(T^{-1}(x_{n-1}),T^{-1}(x_n)) \ge 1
\]
for all $n$. Since $T$ is a
{\em digital $\alpha-\psi$ expansive mapping},
for all $n$ we have
\[ d(x_n,x_{n+1}) \le \alpha(x_n,x_{n+1}) d(x_n,x_{n+1}) \le
\psi(d(T(x_n), T(x_{n+1})) = \]
\[ \psi(d(x_{n-1}, x_n)).
\]

By induction, it follows that
$d(x_n,x_{n+1}) \le \psi^n(d(x_0,x_1))$.
Since $\psi \in \Psi$, it follows that $S$ is a Cauchy sequence. By
Theorem~\ref{eventuallyConst}, $S$
is eventually constant, so there
exists $m$ such that $T(x_{m+1})=x_m = x_{m+1}$; thus,
$x_{m+1}$ is a fixed point of $T$.
\end{proof}

\section{Weakly commuting mappings}
The paper~\cite{Jain-Upad17} presents a
fixed point assertion for ``weakly commuting
mappings," defined as follows.

\begin{definition}
{\rm \cite{Sessa}}
Let $(X,d)$ be a metric space and let
$f,g: X \to X$. Then $f$ and $g$ are
{\em weakly commuting} if for all $x \in X$,
$d(f(g(x)), g(f(x))) \le d(f(x), g(x))$.
\end{definition}

Presented as Theorem~3(A) of~\cite{Jain-Upad17}
is the following.

\begin{quote}
    Let $(X,d,\kappa)$ be a complete digital
    metric space, $X \neq \emptyset$. Let
    $S,T: X \to X$ such that
    
    (3.1) $T(X) \subset S(X)$;
    
    (3.2) $S$ is $\kappa$-continuous;
    
    (3.3) For some $\alpha$ such that 
    $0 < \alpha < 1$ and all $x,y \in X$,
    
    $d(T(x),T(y)) \le \alpha d(S(x), S(y))$.
    
    If $S$ and $T$ are weakly commuting, then
    they have a unique common fixed point.
\end{quote}

The argument given in proof of this assertion is
flawed by the unjustified statement (rephrased slightly), ``From~(3.2) the $\kappa$-continuity
of~$S$ implies the $\kappa$-continuity
of~$T$." This reasoning is incorrect, as shown in
the following.

\begin{exl}
Let $X=\{p_0=(0,0,0), p_1=(1,1,1),p_2=(2,0,0)\} \subset \Z^3$.
Let $S=1_X: X \to X$ and let $T:X \to X$ be defined
by $T(p_0)=T(p_2)=p_2$, $T(p_1)=p_0$. Let
$\kappa$ be the $c_3$-adjacency. Clearly,
(3.1) and (3.2) of the above are satisfied.
Let $d$ be the $\ell_1$ metric. Then (3.3) above is
satisfied with $\alpha = 2/3$. However,
$T$ is not $c_3$-continuous, since
$p_0 \adj_{c_3} p_1$ but $f(p_0)$ and $f(p_1)$ are
not $c_3$-adjacent.
\end{exl}

Therefore, the assertion stated as
Theorem~3(A) of~\cite{Jain-Upad17} must be
regarded as unproven.
However, we see below that replacing the assumptions
of completeness and~(3.2) by assumptions that are 
commonly realized yields a valid statement.

\begin{thm}
\label{correctJainUpad}
Let $(X,d,\kappa)$ be a digital
    metric space, $X \neq \emptyset$, with
    $X$ finite or $d$ an $\ell_p$ metric. Let
    $S,T: X \to X$ such that
\begin{enumerate}
\item $T(X) \subset S(X)$;
\item For some $\alpha$ such that 
    $0 < \alpha < 1$ and all $x,y \in X$,
    
    $d(T(x),T(y)) \le \alpha d(S(x), S(y))$.
\end{enumerate}
If $S$ and $T$ are weakly commuting, then
they have a unique common fixed point.
\end{thm}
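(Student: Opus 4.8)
The plan is to mimic the classical Jungck argument for a pair of maps satisfying a relative contraction, but to replace the roles of completeness and continuity of $S$ by the eventual-constancy phenomenon supplied by Proposition~\ref{eventuallyConst}, which is exactly what the hypothesis that $X$ is finite or $d$ is an $\ell_p$ metric guarantees. First I would use condition~(1), $T(X) \subset S(X)$, to build a sequence: start from an arbitrary $x_0 \in X$ and, having chosen $x_n$, pick $x_{n+1}$ with $S(x_{n+1}) = T(x_n)$. Writing $y_n = T(x_n) = S(x_{n+1})$ and applying condition~(2) gives
\[ d(y_n, y_{n+1}) = d(T(x_n), T(x_{n+1})) \le \alpha\, d(S(x_n), S(x_{n+1})) = \alpha\, d(y_{n-1}, y_n), \]
so that $d(y_n, y_{n+1}) \le \alpha^n d(y_0, y_1)$. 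Since $0 < \alpha < 1$, a routine geometric-series estimate shows $\{y_n\}$ is a Cauchy sequence.

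The decisive step is then to invoke Proposition~\ref{eventuallyConst}: because $X$ is finite or $d$ is an $\ell_p$ metric, the Cauchy sequence $\{y_n\}$ is eventually constant. Thus there is an $m$ with $y_n = y_m =: z$ for all $n \ge m$; in particular $T(x_{m+1}) = y_{m+1} = z$ and $S(x_{m+1}) = y_m = z$, so $x_{m+1}$ is a coincidence point with $S(x_{m+1}) = T(x_{m+1}) = z$. This is precisely where we sidestep the erroneous claim in~\cite{Jain-Upad17} that continuity of $S$ forces continuity of $T$: we never need $S$ continuous, only the existence of a coincidence point, which eventual constancy hands us directly in finitely many steps.

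Next I would promote the coincidence value $z$ to a common fixed point. Since $d(S(x_{m+1}), T(x_{m+1})) = d(z,z) = 0$, weak commutativity gives $d(S(T(x_{m+1})), T(S(x_{m+1}))) = 0$, i.e.\ $S(z) = T(z)$. Applying condition~(2) at the coincidence point, and using $z = T(x_{m+1}) = S(x_{m+1})$ together with $S(z)=T(z)$,
\[ d(T(z), z) = d(T(z), T(x_{m+1})) \le \alpha\, d(S(z), S(x_{m+1})) = \alpha\, d(T(z), z), \]
and since $0 < \alpha < 1$ this forces $d(T(z), z) = 0$, so $T(z) = z$ and hence $S(z) = z$ as well. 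Uniqueness follows from the same contraction: if $z, z'$ are common fixed points, then $d(z, z') = d(T(z), T(z')) \le \alpha\, d(S(z), S(z')) = \alpha\, d(z, z')$, forcing $z = z'$.

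I expect the only genuine obstacle to be conceptual rather than computational: recognizing that eventual constancy (not mere convergence to a limit) is what is available, and that it yields an honest coincidence point $x_{m+1}$ with $S(x_{m+1}) = T(x_{m+1})$. Once that is in hand, the passage from coincidence point to common fixed point via weak commutativity, and the re-use of the contraction inequality, are short and purely algebraic.
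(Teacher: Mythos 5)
Your proposal is correct and follows essentially the same route as the paper's proof: the same iterative construction $S(x_{n+1})=T(x_n)$, the same geometric-series Cauchy estimate, and the same appeal to Proposition~\ref{eventuallyConst} to extract an eventually constant value $z$ with $S(z)=T(z)$ via weak commutativity. The only (harmless) difference is the endgame: you apply the contraction once to the pair $(z, x_{m+1})$ to get $T(z)=z$ directly, whereas the paper applies it twice to show that $T(z)$ is fixed first by $T$ and then by $S$; your finish is marginally shorter but the argument is the same in substance.
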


\begin{proof}
We use ideas from the analogous argument
of~\cite{Jain-Upad17}.

Let $x_0 \in X$. By assumption 1, there exists
$x_1 \in X$ such that $S(x_1)=T(x_0)$. By
induction we have a sequence $\{x_n\}_{n=0}^{\infty}$ such that for all $n$,
$S(x_{n+1})=T(x_n)$, and we have
\[ d(S(x_n),S(x_{n+1})) = d(T(x_{n-1}), T(x_n))
   \le \alpha d(S(x_{n-1}), S(x_n)).
\]
By a simple induction, this yields
\[ d(S(x_n),S(x_{n+1})) \le \alpha^n d(S(x_0), S(x_1)).
\]
Using the Triangle Inequality with the latter, we
have, for any $p \in \N$,
\[ d(S(x_n), S(x_{n+p})) \le
   \sum_{i=n}^{n+p-1} d(S(x_i), S(x_{i+1}) \le
   \sum_{i=n}^{n+p-1} \alpha^i d(S(x_0), S(x_1)) \le
\]
\[ \frac{\alpha^n}{1-\alpha} d(S(x_0), S(x_1))
   \to_{n \to \infty} 0.
\]
Thus, $\{S(x_n)\}_{n=0}^{\infty}$ is a
Cauchy sequence, hence by Proposition~\ref{eventuallyConst} is eventually
constant, i.e., there exists $z \in X$ such that 
for sufficiently large~$n$,
\[ S(x_n) = z.
\]
By our definition of the sequence $\{x_n\}$, we
also have, for sufficiently large~$n$,
\[ T(x_n) = z.
\]
So for $n$ sufficiently large, and since $S$ and
$T$ are weakly commuting,
\[ d(S(z), T(z)) = d(S(T(x_n)), T(S(x_n))) \le d(S(x_n), T(x_n)) = d(z,z) = 0,
\]
i.e., $S(z)=T(z)$ and therefore, by the weakly
commuting property,
\[ d(S(T(z)), T(S(z))) \le d(S(z), T(z)) = 0,
\]
i.e., $S(T(z)) = T(S(z))$. So
\[ d(T(z), T(T(z))) \le \alpha d(S(z), S(T(z))) =
   \alpha d(T(z), T(S(z))) = \]
\[ \alpha d(T(z), T(T(z))).
\]
Thus $d(T(z), T(T(z)))=0$, i.e., $T(z)$ is a fixed
point of $T$. Further, substituting from the above
gives
\[ d(S(T(z)), T(z)) = d(T(S(z)), T(z)) \le
   \alpha d(S(S(z)), S(z)) = 
   \alpha d(S(T(z)), T(z));
\]
since $\alpha > 0$, it follows that
$d(S(T(z)), T(z)) = 0$.
Thus, $T(z)$ is a common fixed point of $S$ and $T$.

To show the common fixed point is unique, suppose
$y$ and $y'$ are common fixed points, i.e.,
\[ S(y) = T(y) = y, ~~~ S(y') = T(y') = y'.
\]
Then
\[ d(y,y') = d(T(y), T(y')) \le 
   \alpha d(S(y), S(y')) = \alpha d(y,y'),
\]
so $d(y,y')=0$. Hence $y=y'$.
\end{proof}

Note the following limitation on
Theorem~\ref{correctJainUpad} is applicable if
$X$ is finite, or if $d$ is an $\ell_p$ metric.

\begin{prop}
Let $(X,d,\kappa), S, T, \alpha$ be as in
Theorem~\ref{correctJainUpad}, where
\[ d_0 = \min \{d(x,x') \, | \,
    x, x' \in X, x \adj_{\kappa} x' \} > 0,
\]
\[ d_1 = \max \{d(x,x') \, | \,
    x, x' \in X, x \adj_{\kappa} x' \}.
\]
If $X$ is $\kappa$-connected, $S$ is 
$\kappa$-continuous, and $0<\alpha < d_0 / d_1$, then $T$ is a constant function.
\end{prop}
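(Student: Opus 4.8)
The plan is to use $\kappa$-connectedness to reduce the problem to adjacent points: if I can show $T(x)=T(y)$ whenever $x \adj_{\kappa} y$, then for arbitrary $x,y \in X$ a $\kappa$-path $x=z_0, z_1, \ldots, z_k=y$ yields $T(x)=T(z_0)=\cdots=T(z_k)=T(y)$, so $T$ is constant. (We may assume $|X|>1$, so that $d_0$ and $d_1$ are defined and $d_1>0$.) So I fix a $\kappa$-adjacent pair $x \adj_{\kappa} y$ and try to force $T(x)=T(y)$.

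First I would bring in the continuity of $S$. Since $x \adj_{\kappa} y$, the $\kappa$-continuity of $S$ gives $S(x) \adjeq_{\kappa} S(y)$; hence either $S(x)=S(y)$ and $d(S(x),S(y))=0$, or $S(x) \adj_{\kappa} S(y)$ and $d(S(x),S(y)) \le d_1$ by the definition of $d_1$. In either case $d(S(x),S(y)) \le d_1$. Substituting into the contractive hypothesis~(2) of Theorem~\ref{correctJainUpad} and using $\alpha < d_0/d_1$, I obtain
\[ d(T(x),T(y)) \le \alpha\, d(S(x),S(y)) \le \alpha\, d_1 < \frac{d_0}{d_1}\, d_1 = d_0. \]

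The remaining---and main---step is to pass from $d(T(x),T(y))<d_0$ to $T(x)=T(y)$, and this is where I expect the real difficulty. Both $T(x)$ and $T(y)$ are points of $X$; under the standing assumption that $X$ is finite or $d$ is an $\ell_p$ metric there is a smallest positive distance between distinct points of $X$, and for the usual $c_u$ adjacencies this smallest distance is exactly $d_0$ (the closest pairs are themselves adjacent). Granting that no two distinct points of $X$ lie closer than $d_0$, the strict inequality $d(T(x),T(y))<d_0$ is impossible unless $T(x)=T(y)$, which is what we want; combined with the connectivity reduction above, this finishes the proof. The delicate point to be checked carefully is precisely this identification of $d_0$ with the minimum positive distance: for a contrived adjacency in which the nearest pair of points is non-adjacent one can have $d_0$ strictly larger than that minimum, and then $d(T(x),T(y))<d_0$ no longer forces equality. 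In that generality the argument must be supplemented---for instance by invoking the weakly commuting hypothesis, which the reasoning above does not otherwise use, or by replacing $d_0/d_1$ in the hypothesis with $\bigl(\min\{d(x,x'):x\neq x'\}\bigr)/d_1$.
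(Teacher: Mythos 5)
Your argument is the same as the paper's: reduce to adjacent pairs by connectedness, use the $\kappa$-continuity of $S$ to bound $d(S(x),S(y))$ by $d_1$, and conclude $d(T(x),T(y)) \le \alpha d_1 < d_0$. The ``delicate point'' you flag at the end is genuine, and it is precisely the step the paper glosses over: its proof simply asserts that $d(T(x),T(x'))<d_0$ forces $T(x)=T(x')$, which tacitly identifies $d_0$ (a minimum over \emph{adjacent} pairs) with the minimum distance over all distinct pairs of $X$. These need not coincide even in unexotic settings: for $X=\{(0,0,0),(1,1,1),(2,0,0)\}$ with $c_3$-adjacency and the $\ell_1$ metric (the same $X$ used elsewhere in the paper), one has $d_0=d_1=3$ while the non-adjacent pair $(0,0,0),(2,0,0)$ is at distance $2<d_0$. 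So your hesitation is warranted; the final step requires either the verification that the closest pair of distinct points of $X$ is adjacent, or a strengthened hypothesis of the kind you propose, and the paper's own proof is incomplete at exactly this point.
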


\begin{proof}
Let $x \adj_{\kappa} x'$. Since $S$ is 
$\kappa$-continuous we have
$S(x) \adjeq_{\kappa} S(x')$, and therefore
$d(S(x), S(x')) \le d_1$. Thus,
\[ d(T(x), T(x')) \le \alpha d(S(x),S(x')) < 
   \frac{d_0}{d_1} d_1 = d_0.
\]
Our choice of $d_0$ implies $T(x)=T(x')$. Since
$X$ is connected, the assertion follows.
\end{proof}


\section{Weakly compatible maps}
The paper~\cite{Dalal17} discusses ``weakly
compatible" or ``coincidentally commuting" maps,
defined as follows.

\begin{definition}
Let $S,T: X \to X$. Then $S$ and $T$ are
{\em weakly compatible} or 
{\em coincidentally commuting} if, for every 
$x \in X$ such that $S(x)=T(x)$ we have 
$S(T(x)) = T(S(x))$.
\end{definition}

The following assertion is stated as Theorem~3.1
of~\cite{Dalal17}.

\begin{quote}
    Let $A,B,S,T: X \to X$, where $(X,d,\kappa)$
    is a complete digital metric space. Suppose
    the following are satisfied.
    \begin{itemize}
        \item $S(X) \subset B(X)$ and $T(X) \subset A(X)$.
        \item The pairs $(A,S)$ and $(B,T)$ are
              coincidentally commuting.
        \item One of $S(X),T(X),A(X),B(X)$ is a
              complete subspace of $X$.
        \item For all $x,y \in X$, $d(S(x),T(y)) \le$
        \[ \phi(\max\{d(A(x),B(y)), d(S(x),A(x)),       d(S(x),B(y)), d(B(y),T(y))\})
        \]
        where $\phi: [0,\infty) \to [0,\infty)$ is
        continuous, monotone increasing, and
        satisfies $\phi(t) < t$ for all $t>0$.
    \end{itemize}
    Then $A,B,S$, and $T$ have a unique common
    fixed point.
\end{quote}

However, the argument offered as proof of this
assertion is flawed as follows. A sequence
$\{y_n\}_{n=0}^{\infty}$ is established and it
is shown that 
$\lim_{n \to \infty} d(y_{2n},y_{2n+1})=0$.
From this, it is claimed that
$\{y_n\}_{n=0}^{\infty}$ is a Cauchy sequence. But
such reasoning is incorrect, as shown in the following.

\begin{exl} For $n \in \N$, let
\[ y_n = \left \{ \begin{array}{ll}
         0 & \mbox{if } (n \mod 4) \in \{0,1\}; \\
         1 & \mbox{if } (n \mod 4) \in \{2,3\},
         \end{array} \right .
\]
For all $n \in \N$, $d(y_{2n},y_{2n+1})=0$, yet 
$\{y_n\}_{n=0}^{\infty}$ is not a Cauchy sequence.
\end{exl}

Thus, the assertion of~\cite{Dalal17} stated 
as Theorem~3.1, and its dependent assertion stated 
as Theorem~3.2, must be regarded as unproven.

\section{Further remarks}
We have discussed assertions that appeared 
in~\cite{BEKLL,Dalal17,Jain-Upad17,Jyoti-Rani17}. 
We have discussed errors or corrections for some,
shown some to be limited or trivial, and offered 
improvements for others.

\end{document}